\newcommand{\sS}{\mathcal{S}}
\newcommand{\R}{\mathbb R}
\renewcommand\epsilon{\varepsilon}
\DeclareMathOperator*{\diag}{diag}
\DeclareMathOperator*{\interior}{int}
\DeclareMathOperator*{\cone}{cone}
\DeclareMathOperator*{\lin}{lin}
\theoremstyle{plain}
\newtheorem*{theorem*}{Theorem}
\newtheorem{theorem}{Theorem}[section]
\newtheorem{lemma}[theorem]{Lemma}
\theoremstyle{definition}
\newtheorem*{definition*}{Definition}
\theoremstyle{remark}
\newtheorem{remark}[theorem]{Remark}
\newtheorem{example}[theorem]{Example}
\begin{document}


\title{Smooth Hyperbolicity Cones are Spectrahedral Shadows}

\author{Tim Netzer}
\address{Tim Netzer, Universit\"at Leipzig, Germany}
\email{netzer@math.uni-leipzig.de}

\author{Raman Sanyal}
\address{Raman Sanyal, Fachbereich Mathematik und Informatik, %
Freie Universit\"at Berlin, %
Germany}
\email{sanyal@math.fu-berlin.de}

\keywords{hyperbolic polynomials, hyperbolicity cones, spectrahedra,
spectrahedral shadows, sdp-representable}
\subjclass[2010]{90C25, 90C22, 52A41, 52B99, 14P10}

\date{\today}
\thanks{Raman Sanyal has been supported by the European Research Council under
the European Union's Seventh Framework Programme (FP7/2007-2013) / ERC grant
agreement n$^\mathrm{o}$ 247029.}

\begin{abstract} 
    Hyperbolicity cones are convex algebraic cones arising from hyperbolic
    polynomials. A well-understood subclass of hyperbolicity cones is that of
    spectrahedral cones and it is conjectured that every hyperbolicity cone
    is spectrahedral. In this paper we prove a weaker version of this
    conjecture by showing that every smooth hyperbolicity cone is the linear
    projection of a spectrahedral cone, that is, a spectrahedral shadow.
\end{abstract}

\maketitle

\section{Introduction} \label{sec:intro}

A homogeneous polynomial $h \in \R[x]=\R[x_1,\ldots,x_n]$ is called 
\emph{hyperbolic in direction $e\in\R^n$}, if $h(e) \neq 0$ and the univariate
polynomials 
\[
    h_{a,e}(t) \ := \ h(a-te) \ \in \ \R[t]
\]
have only real roots, for all $a\in\R^n$. So, geometrically, all the lines
parallel to $\R e$ meet the hypersurface associated to $h$ in only real
points.  Hyperbolic polynomials were first considered in the area of partial
differential equations, e.g.\ in~\cite{gar,lax}. Recently, there has been
renewed interest in hyperbolic polynomials in the areas of
optimization~\cite{gue,BGLS,ren}, convex algebraic
geometry~\cite{hevi,vin,br1}, and combinatorics~\cite{cosw,br2,gur}. The
connection to convex geometry was discovered by G\aa rding who proved that the
\emph{hyperbolicity cone} 
$$
\Lambda_e(h) \ := \ 
    \left\{ a\in\R^n \;:\; h_{a,e}(t) \text{ has only non-negative real roots} \right\}
$$
is a closed convex cone. An alternative characterization of $\Lambda_e(h)$ is
the closure of the connected component of $\{ h \not= 0 \}$ containing $e$.
G\aa rding also showed that $h$ is hyperbolic in all directions $e'\in
\interior \Lambda_e(h)$ and that the hyperbolicity cone is independent of
these directions, see also Renegar \cite{ren}. The importance of hyperbolic
polynomials to convex optimization was recognized by G\"uler~\cite{gue} how
showed that interior point methods used in convex programming work for
hyperbolicity cones by utilizing $-\log h(x)$ as a barrier function.

An important class of hyperbolic polynomials arises from \emph{definite
determinantal representations}.  If $M_1,M_2, \dots,M_n$ are hermitian matrices
with $e_1M_1+ e_2M_2 + \cdots +e_nM_n$ strictly definite, then
\[
    h \ = \ \det\left(x_1M_1 \ + \ x_2M_2 \ + \ \cdots \ + \ x_nM_n\right)
\]
is hyperbolic in direction $e$.  In fact, without loss of generality we 
 can assume that $e_1M_1+\cdots +e_nM_n=I$. It can be easily seen that 
$h_{a,e}(t)$ is then the characteristic polynomial of the
hermitian matrix $M(a) = a_1M_1+\cdots +a_nM_n$ and the hyperbolicity cone is 
\begin{equation}\label{spec}
    \Lambda_e(h) \ = \ \{a\in\R^n\;:\; M(a) \ = \ a_1M_1 \ + \ a_2M_2 \ + \
\cdots \ + \ a_nM_n \text{
    is positive semidefinite} \}.
\end{equation} 
A cone of this form is a linear section of the cone of (hermitian) positive
semidefinite matrices and is called a \emph{spectrahedral cone}. Spectrahedral
cones or, more precisely, spectrahedra are exactly the sets of feasible
solutions to semidefinite programs.  It is conjectured that the classes of
spectrahedral and hyperbolicity cones coincide.

\textbf{Generalized Lax Conjecture.} \emph{Every hyperbolicity
cone is spectrahedral, i.e., a linear section of the cone of positive
semidefinite matrices.}

In its original form, the Lax conjecture was stated for $n=3$, and proved by
Helton \& Vinnikov \cite{hevi} in an even stronger form; see also
\cite{lepara}. For $n \ge 4$, the conjecture is still wide open;
see~\cite{vin} for an up-to-date overview. 

Unlike polyhedra, the class of spectrahedra is not closed under projection.
The image of a spectrahedron under a linear projection is called a
\emph{spectrahedral shadow} or \emph{sdp-representable set}. Spectrahedral
shadows lack many of the desirable properties of spectrahedra. However, from a
practical viewpoint, spectrahedral shadows are very valuable as optimization
over a spectrahedral shadow can be done with semidefinite programming. In this
paper we prove a weaker version of the Generalized Lax Conjecture. 

\begin{theorem}\label{mainhyp}
    Let $h\in \R[x]$ be hyperbolic with respect to $e$. If each non-zero point
    in the boundary of $\Lambda_e(h)$ is a smooth point of $h$, then
    $\Lambda_e(h)$ is a spectrahedral shadow. 
\end{theorem}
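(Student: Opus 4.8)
The plan is to reduce $\Lambda_e(h)$ to a compact affine slice and then invoke a semidefinite-representability criterion of Helton and Nie for convex bodies whose boundary is nonsingular and, away from flat faces, positively curved. First I would dispose of the routine reductions. Replacing $h$ by its square-free part changes neither $\Lambda_e(h)$ nor which of its boundary points are smooth points of the hypersurface $\{h = 0\}$, so assume $h$ is square-free. If $\Lambda := \Lambda_e(h)$ is not pointed, split off its lineality space $L := \Lambda \cap (-\Lambda)$, writing $\Lambda = L \oplus \Lambda''$ with $\Lambda'' := \Lambda \cap L^{\perp}$ a pointed hyperbolicity cone (for a suitable restriction of $h$ to $L^{\perp}$) still satisfying the smoothness hypothesis; since the product of a linear subspace with a spectrahedral shadow is a spectrahedral shadow, we may assume $\Lambda$ is pointed. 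Choosing $e'$ in the interior of the dual cone $\Lambda^{*}$, the slice $K := \Lambda \cap \{x : \langle e', x\rangle = 1\}$ is then a compact convex body with $\Lambda$ equal to the cone over $K$, and $\Lambda$ is a spectrahedral shadow if and only if $K$ is.

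The heart of the argument is a local analysis at the nonzero smooth boundary points, using the first Renegar derivative $h' := D_e h$, the inclusion $\Lambda \subseteq \Lambda_e(h')$, and the identity $\Lambda = \Lambda_e(h') \cap \{h \ge 0\}$, where the ``eigenvalues'' $\lambda_i(x)$ are the roots of $t \mapsto h(x - te)$. The key claim is: if $p \in \partial\Lambda \setminus \{0\}$ is a smooth point of $h$, then $0$ is a \emph{simple} root of $h(p - te)$. Indeed, the tangent hyperplane $\nabla h(p)^{\perp}$ of $\{h = 0\}$ at $p$ is a supporting hyperplane of the convex set $\Lambda$, and since $e \in \interior\Lambda$ it cannot lie on that hyperplane, so $\langle \nabla h(p), e\rangle \neq 0$; this forces exactly one $\lambda_i(p)$ to vanish. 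By Rolle's theorem the roots of $h'(p - te)$, being the critical points of $t \mapsto h(p - te)$, are then all strictly positive, so $p \in \interior\Lambda_e(h')$; hence near every nonzero smooth boundary point the constraint $x \in \Lambda_e(h')$ is slack and $\Lambda$ is locally cut out by the single inequality $h \ge 0$ with $\nabla h(p) \neq 0$. Combining this with G\aa rding's concavity of $h^{1/d}$ (where $d = \deg h$) on $\Lambda$ and Euler's relation $\langle \nabla h(p), p\rangle = d\, h(p) = 0$, the second fundamental form of $\partial\Lambda$ at such a $p$ is semidefinite with the radial direction $p$ in its kernel; since $p$ is transverse to the slicing hyperplane, $\partial K$ is positively curved at the points coming from smooth boundary points at which the radial direction is the \emph{only} degenerate direction, and it is a piece of a supporting hyperplane near a relative-interior point of a flat face of $\Lambda$.

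With these facts, compactness gives a finite cover of $\partial K$ by neighborhoods in each of which $K$ is cut out by $h \ge 0$, or by a linear inequality on a flat face, with boundary piece either positively curved or flat; the Helton--Nie semidefinite-representability theorem then yields that $K$, hence $\Lambda$, is a spectrahedral shadow. I expect the main obstacle to be the curvature analysis: identifying exactly the degenerate directions of the second fundamental form at smooth boundary points, excluding or otherwise handling ``parabolic'' boundary points that are neither strictly curved nor locally flat, and dealing with flat faces and reducible $h$, where the smoothness of $\partial\Lambda$ need not descend to the hyperbolicity cones of the individual irreducible factors. By contrast, the iterated-derivative semialgebraic description $\Lambda = \bigcap_{j=0}^{d-1}\{D_e^{\,j} h \ge 0\}$ and the reduction to a compact base are routine and by themselves do not produce a spectrahedral shadow.
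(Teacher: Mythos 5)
Your high-level strategy (pass to a compact affine slice, analyze local smooth structure of the boundary, invoke a Helton--Nie representability theorem) parallels the paper's, and some intermediate observations are correct: the pointedness/lineality reduction, the slicing, and the fact that at a smooth boundary point $p$ of $\Lambda$ the root of $h(p-te)$ at $t=0$ is simple (this matches the paper's Remark after Lemma~\ref{mult}). However, you leave exactly the central technical step unresolved, and you flag it yourself: establishing that the second fundamental form of $\partial\Lambda$ at a smooth boundary point is \emph{strictly} definite in the non-radial directions. G\aa rding's concavity of $h^{1/d}$ only yields negative \emph{semi}definiteness, and Euler's relation puts the radial direction in the kernel, but neither shows the radial direction is the \emph{only} degeneracy. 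Without that, the Helton--Nie hypothesis (which in the form Theorem~\ref{henith} uses is \emph{strict} quasi-concavity of the defining polynomials, not merely semidefinite curvature with a ``flat or curved'' dichotomy) cannot be verified, and parabolic boundary points remain a live threat.

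This is precisely where the paper does real work. Lemma~\ref{sqc}(ii) proves strict quasi-concavity at smooth boundary points by restricting $p$ to the two-plane spanned by $a$ and the putatively degenerate direction $v$, invoking the Helton--Vinnikov theorem to obtain a $2$-variable determinantal representation $\det(I + sA + tV)$, and computing the first and second Taylor coefficients explicitly; the conclusion is that $p_1(v)=0$ and $p_2(v)\geq 0$ force $p$ to vanish on a full line through $a$, which is excluded by compactness of the slice. Lemma~\ref{sqc}(i) then handles interior points (needed because the Helton--Nie theorem requires strict quasi-concavity at \emph{every} point of the compact set, not just on the boundary). The paper also organizes the argument via a finite cover of $\partial\sS_e(p)$ by small balls (each a spectrahedral shadow by Theorem~\ref{henith}), glued with the Helton--Nie convex-hull result from~\cite{heni2}, rather than applying a single global representability criterion as you propose. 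Finally, your worry about reducible $h$ is actually moot under the smoothness hypothesis of Theorem~\ref{mainhyp}: as the paper's Remark~\ref{rem} notes, if every boundary point is a smooth point of $h$ then only one irreducible factor can vanish there, so the smoothness does descend to the relevant factor. In summary, the skeleton is sound but the proposal is missing the key lemma that makes the curvature strict.
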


We actually prove a stronger result for a larger class of hyperbolic
polynomials, but we defer the more technical assumptions (cf.\
Theorem~\ref{mainrz}). Let us emphasize that the assumption on $h$ is very
mild as it comprises all strictly hyperbolic polynomials ($h_{a,e}(t)$ has
only simple roots) which form a dense open subset among all hyperbolic
polynomials (in fixed dimension). This result of Nuij \cite{nuij} is explained
in more detail at the end of the paper. At this point we note that our results
imply that {\it any} hyperbolicity cone can be easily approximated arbitrarily
close by spectrahedral shadows.

For the proof, it will be sufficient to consider pointed hyperbolicity cones
and we pass to a dehomogenization $\sS$ of $\Lambda_{e}(h)$ which is a
compact, convex, and basic semi-algebraic set described by (inhomogeneous)
real-zero polynomials. We show that the describing polynomials are
\emph{strictly quasi-concave} in a neighborhood of every smooth point in the
boundary of $\sS,$ which enables us to use results and ideas of 
Helton \& Nie \cite{heni,heni2} to show that $\sS$ is a spectrahedral shadow.
These tools break down in the presence of more severe singularities (e.g.\
self-intersections) in the boundary of $\Lambda_{e}(h),$ but we conjecture that 
\emph{all} hyperbolicity cones are spectrahedral shadows. \\

\textbf{Acknowledgements.} We would like to thank Daniel Plaumann for many inspiring discussions on the topic.

\section{Real-zero polynomials and quasi-concavity} \label{sec:rz_quasi}
\newcommand\oh{\overline{h}}
\renewcommand\oe{\overline{e}}

A (possibly inhomogeneous) polynomial $p \in \R[x]$ is called
\emph{real-zero with respect to $e\in\R^{n}$}, if $p(e)\neq
0$ and the univariate polynomials $p_{a,e}(t):=p(e+ta)\in \R[t]$ have
only real roots, for all $a\in\R^n$. So the defining property of real-zero
polynomials is similar to that of hyperbolic polynomials, the main difference
being that now every line \emph{through} $e$ has to meet the
hypersurface of $p$ in only real points.  The precise relation to hyperbolic
polynomials is as follows: If $p$ is a real-zero polynomial with respect to $e$ of
degree $d$, then the homogenization $h(x_0,x) := x_0^d\, p(\frac{x}{x_0} )$ is
hyperbolic wrt to $(1,e)$.  Conversely, if $h$ is hyperbolic with
respect to $e \in \R^n$, then the restriction of $h$ to any hyperplane $H$
containing $e$ is a real-zero polynomial with respect to $e \in H$.

For a real-zero polynomial $p$, the set 
\[
    \sS_e(p) \ = \ \{ a \;:\; p_{a-e,e}(t)\ \text{has no root in } [0,1)\}
\]
is called the {\it rigidly convex set of $p$ and $e$}. It is the closure of
the connected component of $\{p\neq 0\}$ containing $e$ and coincides with $H
\cap \Lambda_{e}(h)$. It follows that $\sS_e(p)$ is a closed convex set and
$p$ is real-zero with respect to every point in the interior of $\sS_e(p)$.

If there are hermitian matrices $M_0,M_1,\dots,M_n$ such that
$$
    p \ = \ \det\left( M_0 \ + \ x_1M_1\ + \ \cdots \ + \ x_nM_n \right)
$$
and $M_0+e_1M_1+\cdots +e_nM_n$ is positive definite, then $p$ is said to have a \emph{definite determinantal representation} and
the rigidly convex region
\[
    \sS_e(p) \ = \ \{a\in \R^n \;:\; M_0 \ + \ a_1M_1 \ + \ \cdots \ + \
    a_nM_n \text{ positive semidefinite} \}
\]
is called a \emph{spectrahedron}. Again, linear projections of spectrahedra are called {\it spectrahedral shadows}.

Let us recall the notion of multiplicity of a point $a$ with respect to a
polynomial $h$. We can write $h(x+a)$ as a sum of homogeneous terms
\[
    h(x+a) \ = \ h_0(x) \ + \ h_{1}(x) \ + \ \cdots \ + \ h_d(x)
\]
where $h_i(x)$ is homogeneous of degree $i,$ and the \emph{multiplicity} or
\emph{order of vanishing} of $h$ at $a$ is the smallest $m$ for which $h_m(x)
\not= 0$. Thus, a point $a$ lies on the hypersurface associated to $h$ if and
only if the multiplicity is positive. A point $a$ is a \emph{smooth point} of
$h$ if the multiplicity is $1$. Equivalently, let $v \in \R^n$ be a generic
direction and consider the univariate polynomial
\[
    h_{a,v}(t) \ = \ h(a - tv) \ = \ h_0(-v) \ + \ h_1(-v) t \ + \ \cdots \ + \
    h_d(-v) t^d.
\]
Then the multiplicity of $a$ equals the multiplicity of $h_{a,v}(t)$ at $t =
0$. Here, the genericity assumption means that $h_i(v) \not = 0$ whenever $h_i
\not= 0$. Clearly, the order of vanishing of $h_{a,v}(t)$ at $t=0$ can only be
larger for non-generic $v$.  The next lemma asserts that for a hyperbolic
polynomial, every direction in the interior of the hyperbolicity cone is
sufficiently generic. For points $a\in\Lambda_e(h)$,
this was shown by Renegar~\cite[Prop.~22]{ren}.

\begin{lemma}\label{mult}
    Let $h \in \R[x]$ be a hyperbolic polynomial with respect to $e$. For 
    $a \in \R^n$, the multiplicity of $a$ with respect to $h$
    equals the order of vanishing of $h_{a,f}(t)$ at $t = 0$, independent of
    the choice of $f \in
    \interior \Lambda_{e}(h)$.
\end{lemma}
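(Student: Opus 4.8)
The plan is to reduce the statement to the known case $a \in \Lambda_e(h)$ (Renegar, Prop.~22) by a continuity/homogeneity argument, together with the basic fact that the multiplicity of $a$ can only drop (never increase) along ``generic enough'' directions, and that $f \in \interior\Lambda_e(h)$ is always at least as generic as any fixed generic $v$. Concretely, write $m$ for the true multiplicity of $h$ at $a$, i.e.\ $h(x+a) = h_m(x) + h_{m+1}(x) + \cdots + h_d(x)$ with $h_m \not\equiv 0$; then for any direction $w$ one has $h_{a,w}(t) = h(a-tw) = h_m(-w)t^m + h_{m+1}(-w)t^{m+1} + \cdots$, so the order of vanishing of $h_{a,w}(t)$ at $t=0$ equals $m$ precisely when $h_m(-w) \neq 0$, and is strictly larger otherwise. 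Thus the content of the lemma is exactly the claim that $h_m(-f) \neq 0$ (equivalently $h_m(f) \neq 0$, since $m$ and $d$ have fixed parity relations and in any case $h_m$ is homogeneous so nonvanishing is unaffected by sign) for every $f \in \interior\Lambda_e(h)$.

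First I would record that $h_m$ is itself a hyperbolic polynomial with respect to $e$ (this is a standard fact: the lowest-degree part of a hyperbolic polynomial at any point is again hyperbolic in direction $e$, because $h_m(x) = \lim_{s\to 0} s^{-m} h(a + sx)$ is a limit of rescalings of $h$, and limits of hyperbolic polynomials in a fixed direction that do not vanish at $e$ stay hyperbolic; alternatively one invokes that $h_m$ is the ``localization'' of $h$ at $a$ and cite the relevant statement, e.g.\ from Renegar~\cite{ren}). Moreover $\Lambda_e(h) \subseteq \Lambda_e(h_m)$, so in particular $\interior\Lambda_e(h) \subseteq \interior\Lambda_e(h_m) \subseteq \Lambda_e(h_m)$. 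Now for a hyperbolic polynomial, no point of its hyperbolicity cone lies ``deeper'' on the hypersurface than allowed — but more to the point, the apex $0$ is a point of $\Lambda_e(h_m)$ and $h_m$ is homogeneous of degree $m$, hence $0$ has multiplicity exactly $m$ with respect to $h_m$; then Renegar's Proposition~22 applied to $h_m$ at the point $a' = 0 \in \Lambda_e(h_m)$ says precisely that the order of vanishing of $(h_m)_{0,f}(t) = h_m(-tf) = (-t)^m h_m(f)$ at $t=0$ equals the multiplicity $m$, which forces $h_m(f) \neq 0$. This is the crux.

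The remaining step is bookkeeping: having shown $h_m(f) \neq 0$ for every $f \in \interior\Lambda_e(h)$, I conclude as in the paragraph above that $h_{a,f}(t)$ vanishes to order exactly $m$ at $t=0$, which is the multiplicity of $a$ with respect to $h$; and this value is manifestly independent of the choice of $f$ in the interior. I should also note at the outset that $h_{a,f}(t)$ does have all real roots (so the statement is not vacuous): since $f \in \interior\Lambda_e(h)$, $h$ is hyperbolic in direction $f$ by G\aa rding's theorem, so every $h_{a,f}$ has only real roots.

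**Main obstacle.** The one point requiring care is the claim that $h_m$ is hyperbolic with respect to $e$ with $\Lambda_e(h) \subseteq \Lambda_e(h_m)$ and that the multiplicity of the apex $0$ for $h_m$ is exactly $m$; the last of these is immediate from homogeneity, but the first two need either a clean citation or a short limiting argument (the set of polynomials hyperbolic in a fixed direction, normalized at $e$, is closed, and $h_m$ arises as such a limit; the cone inclusion follows since each rescaled copy $s^{-m}h(a+sx)$ has hyperbolicity cone containing $\Lambda_e(h)$ translated appropriately, and one passes to the limit). An alternative, avoiding $h_m$ entirely, is to argue directly: fix a generic $v$ with $h_m(-v)\neq 0$, and show that the order of vanishing of $h_{a, f}(t)$ at $t=0$ is constant as $f$ ranges over $\interior\Lambda_e(h)$ by a connectedness argument — the order is upper semicontinuous in $f$ and, using that all roots of $h_{a,f}$ are real and vary continuously, a jump in the multiplicity at $t=0$ would require roots colliding at $0$ from a nearby $f$, contradicting that for $f$ near the generic $v$ (which lies in $\interior\Lambda_e(h)$ after scaling) the multiplicity is the minimal value $m$; I would likely present whichever of these two routes is shorter given the tools already set up.
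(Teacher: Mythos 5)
Your reduction of the lemma to the claim that $h_m(f)\neq 0$ for every $f\in\interior\Lambda_e(h)$ is correct, and the route through the localization $h_m$ is a genuinely different approach from the paper's: the paper instead restricts to the ternary polynomial $g(r,s,t)=h(ra-sv-tf)$, applies the Helton--Vinnikov theorem to get $g=\det(rA-sV-tF)$ with $V,F$ positive definite, and reads off that the orders of vanishing of $g(1,s,0)$ and $g(1,0,t)$ at $0$ both equal $\dim\ker A$. Your approach would avoid Helton--Vinnikov, which is an attractive feature in principle.

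However, there is a circularity in your key step. You justify ``$h_m$ is hyperbolic with respect to $e$'' by observing $h_m=\lim_{s\to 0}s^{-m}h(a+sx)$ and invoking that limits of hyperbolic polynomials in a fixed direction \emph{that do not vanish at $e$} stay hyperbolic. But the hypothesis ``does not vanish at $e$,'' applied to the limit, is $h_m(e)\neq 0$ --- and this is exactly the $f=e$ instance of the lemma, since the order of vanishing of $h_{a,e}(t)$ at $t=0$ equals $m$ precisely when $h_m(e)\neq 0$. The limit argument does legitimately show that $h_m(x-te)$ is real-rooted in $t$ for every $x$, but that alone is consistent with $h_m(e)=0$ (then $h_m$ is just a real-rooted homogeneous polynomial vanishing at $e$, not a hyperbolic one). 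Once $h_m(e)\neq 0$ is known, everything else you need is soft --- $\Lambda_e(h)\subseteq\Lambda_e(h_m)$, and $h_m\neq 0$ on $\interior\Lambda_e(h_m)$ by definition of the hyperbolicity cone --- and in particular Renegar's Prop.~22 is not required at all; so the entire content of the lemma is concentrated in the one sentence you treat as a standard fact. Citing Renegar does not fill this hole: his Prop.~22 is stated only for $a\in\Lambda_e(h)$, which is exactly the restriction the lemma is designed to remove. The localization statement for arbitrary $a$ is indeed a true classical result (Atiyah--Bott--G\aa rding; H\"ormander's treatment of localizations of hyperbolic operators), so with a correct citation your argument would work, but as written it presupposes its conclusion. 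Your fallback ``connectedness'' sketch has the same defect: upper semicontinuity of the vanishing order $f\mapsto\mathrm{ord}_{t=0}\,h_{a,f}(t)$ only says it can jump \emph{up} on a proper closed subset of the connected open set $\interior\Lambda_e(h)$, which connectedness does not forbid, and ``roots colliding at $0$'' is exactly the phenomenon you would need to rule out rather than a contradiction in itself.
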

\begin{proof}
    Let $a \in \R^n$, $v \in \interior \Lambda_{e}(h)$ generic, and $f \in
    \interior
    \Lambda_e(h)$ arbitrary. Consider the hyperbolic polynomial 
    \[
        g(r,s,t) \ = \ h(r a \ - \ s  v \ - \ t f ).
    \]
    The multiplicity of $a$ with respect to $h$ is thus the order of vanishing of $g(1,s,0)$ at $s
    = 0,$ and the claim is that this is the same as the order of vanishing of $g(1,0,t)$ at $t=0$. By the
    Helton-Vinnikov Theorem~\cite{hevi}, we have
    \[
        g(r,s,t) \ = \ \det( r A \ - \ s V \ - \ t F )
    \]
    where $V$ and $F$ are positive definite matrices. Hence, they
    have a Cholesky factorization $V = \bar{V}\bar{V}^t$ and  $F =
    \bar{F}\bar{F}^t$ and the order of vanishing is the dimension of the kernel
    of $\bar{V}^{-1}A\bar{V}^{-t}$ and $\bar{F}^{-1}A\bar{F}^{-t}$,
    respectively.
\end{proof}
\begin{remark}(i)
Note that a similar result for real-zero polynomials is immediately deduced: The usual multiplicity of $a$ with respect to $p$ coincides with the order of vanishing of $p_{a-f,f}(t)$ at $t=1$, for any $f$ in the interior of $\sS_e(p).$

(ii) Note that in view of Lemma \ref{mult}, the assumption from Theorem \ref{mainhyp} just means that for $0\neq a\in\partial\Lambda_e(h),$ the polynomial $h_{a,e}(t)$ has a simple zero at $t=0.$
\end{remark}

For a twice differentiable function $g : \R^n \rightarrow \R$ let us denote by
$\nabla g(a)$ the gradient of $g$ at $a$ and by ${\rm H}(g;a)$ the Hessian
matrix.  The function $g$ is called \emph{strictly quasi-concave} at a point
$a \in \R^n$ if the quadratic form $v \mapsto v^t {\rm H}(g;a) v$ is negative
definite on the orthogonal complement of $\nabla g(a)$. In formulas, this is
for every $v \in \R^n \setminus \{0\}$ we require
\[
    v^t \nabla g(a) \ = \ 0 \quad \Rightarrow\quad  v^t {\rm H}(g;a) v \ < \ 0.
\]
The notion of strict quasi-concavity was introduced by Helton and
Nie~\cite{heni}, to give a condition when a basic semialgebraic set is a
spectrahedral shadow:
\begin{theorem}[{Helton \& Nie~\cite[Thm.~2]{heni}}]\label{henith} 
    Let $g_1,g_2,\ldots,g_m\in\R[x]$ and assume 
    $$
        S=\{ a\in\R^n\;:\; g_1(a)\geq 0,g_2(a) \ge 0,\ldots, g_m(a)\geq 0\}
    $$
    is compact and convex with nonempty interior. If each $g_i$ is strictly
    quasi-concave at each point of $S$, then $S$ is a spectrahedral shadow.
\end{theorem}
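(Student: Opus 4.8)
The plan is to reduce the global statement to a boundary-local one, to solve the local problem using the curvature encoded in strict quasi-concavity, and then to reassemble $S$ by a convex-hull argument; this is the strategy of Helton and Nie \cite{heni,heni2}. Two standard closure properties of spectrahedral shadows will be used throughout: the class is closed under finite intersections, and the convex hull of a finite union of compact spectrahedral shadows is again a compact spectrahedral shadow. I will also use the elementary fact that a compact convex set with nonempty interior is the convex hull of its boundary.

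I would first carry out the reassembly, assuming the local models are available. As $\partial S$ is compact, cover it by finitely many open balls $U_1,\dots,U_k$, and suppose that for each $j$ there is a closed convex spectrahedral shadow $T_j$ with $T_j\cap U_j=S\cap U_j$. After shrinking the balls slightly --- they still cover the compact set $\partial S$ --- we may assume $\overline{U_j}$ lies in the original ball, so that $S\cap\overline{U_j}=T_j\cap\overline{U_j}$ is a compact spectrahedral shadow, being the intersection of $T_j$ with a ball. Since $\partial S\subseteq\bigcup_j(S\cap\overline{U_j})\subseteq S$ and $S=\mathrm{conv}(\partial S)$, we obtain $S=\mathrm{conv}\bigl(\bigcup_j(T_j\cap\overline{U_j})\bigr)$, a convex hull of finitely many compact spectrahedral shadows and hence a spectrahedral shadow. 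So everything reduces to producing the local models $T_j$.

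Next fix $\bar u\in\partial S$ and let $A=\{i:g_i(\bar u)=0\}$ be its active set. Strict quasi-concavity is an open condition, and the inactive constraints are positive at $\bar u$, so I can choose a small ball $U\ni\bar u$ on which every inactive $g_i$ stays positive and on whose closure every $g_i$ with $i\in A$ is strictly quasi-concave. Then $S\cap U=\bigcap_{i\in A}(\{g_i\ge0\}\cap U)$, so by closure under intersection it suffices, for each active $i$, to produce a closed convex spectrahedral shadow agreeing with $\{g_i\ge 0\}$ near $\bar u$. Observe first that $\nabla g_i(\bar u)\ne 0$: otherwise strict quasi-concavity forces ${\rm H}(g_i;\bar u)$ to be negative definite, so $g_i$ would have a strict local maximum $0$ at $\bar u$, contradicting that $g_i\ge 0$ on $\interior S$, whose points accumulate at $\bar u$. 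Hence, after an affine change of coordinates placing $\bar u$ at the origin with $\nabla g_i$ along the last coordinate axis, the implicit function theorem writes $\{g_i\ge 0\}$ near the origin as $\{x_n\ge \varphi(x')\}$ with $\varphi(0)=0$ and $\nabla\varphi(0)=0$; strict quasi-concavity of $g_i$ translates into the Hessian of $\varphi$ being positive definite near the origin, so that $\{g_i\ge 0\}\cap U$ is convex --- in fact strictly convex with smooth boundary --- once $U$ is a sufficiently small box.

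It remains to give an SDP representation of this local strictly convex body, and this is the heart of the matter. I would pass to the compact convex set $K=\{x:g_i(x)\ge 0,\ \rho-\|x-\bar u\|^2\ge 0\}$ (convex for $\rho$ small, by the previous paragraph), which coincides with $\{g_i\ge 0\}$ on a ball around $\bar u$, and invoke Lasserre's moment relaxations $\mathcal L_N(K)$: each is a spectrahedral shadow, and $\mathcal L_N(K)$ decreases to $\mathrm{conv}(K)=K$ as $N\to\infty$. The decisive claim is that the convergence is \emph{finite} --- $\mathcal L_N(K)=K$ for some fixed $N$ --- after which $T_i:=\mathcal L_N(K)$ is the sought local model. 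This is exactly where the curvature enters: the boundary of $K$ is, near each of its points, a smooth hypersurface with positive definite second fundamental form (the part coming from $\{g_i=0\}$ by strict quasi-concavity, the spherical part trivially), the two pieces meeting transversally when $\rho$ is small; equivalently, for every linear functional the minimum over $K$ is attained at points satisfying the KKT conditions with strict complementarity and positive-definite Hessian of the Lagrangian on the tangent space. Under such optimality conditions the Lasserre hierarchy terminates at a finite order, and a compactness argument over the unit sphere of functionals (the optimality conditions, and hence a sufficient order, persist under small perturbations) makes the order uniform. Proving this quantitative finite-termination statement --- via Putinar's Positivstellensatz together with the second-order conditions --- is the main obstacle; by comparison the reassembly, the reduction to a single constraint, the translation of strict quasi-concavity into local strict convexity, and the bookkeeping of open versus closed neighbourhoods are all soft.
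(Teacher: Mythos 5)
The paper does not prove this statement; it is imported verbatim as a black box from Helton and Nie \cite{heni} and used in the proof of Theorem~\ref{mainrz}. So there is no in-paper proof against which to compare your argument, and I can only assess whether your sketch is a plausible reconstruction of the cited result.

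Your high-level strategy is a faithful description of what Helton and Nie actually do. The reassembly step is correct and complete as written: covering $\partial S$ by finitely many balls, shrinking, intersecting with the closed balls to get compact pieces, and recovering $S$ via $S=\mathrm{conv}(\partial S)\subseteq\mathrm{conv}\bigl(\bigcup_j(S\cap\overline{U_j})\bigr)\subseteq S$, followed by the convex-hull-of-finitely-many-compact-shadows theorem (this is exactly the role Theorem~2.2 of \cite{heni2} plays in this paper). The reduction to a single active constraint near a boundary point, the observation that $\nabla g_i(\bar u)\ne 0$ (else strict quasi-concavity makes the Hessian negative definite, giving a strict local maximum at value $0$ and hence no interior points of $S$ nearby), and the implicit-function-theorem translation of strict quasi-concavity into local strict convexity of a graph are all correct and standard.

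The gap is exactly the one you flag yourself: the claim that the Lasserre moment relaxation $\mathcal L_N(K)$ of the localized body $K$ stabilizes at $K$ for some finite $N$, uniformly over the sphere of linear functionals. That finite-convergence result under positive curvature is not a soft consequence of Putinar's Positivstellensatz plus second-order optimality conditions; it is the entire technical content of Helton and Nie's paper, occupying most of \cite{heni}. The compactness argument you gesture at (optimality conditions and the sufficient relaxation order persist under small perturbations of the functional) requires a quantitative, uniform version of Putinar with explicit degree bounds depending on the curvature and the modulus of strict complementarity, and establishing that uniformity is genuinely hard. So your proposal is an accurate and honest roadmap of the Helton--Nie proof, but it is not a proof: it reduces the theorem to its own central lemma without supplying it. Since the paper under review also treats this theorem as a black box, that is a reasonable place to stop for the purposes of reading this paper, but it should not be mistaken for a self-contained argument.

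Two minor remarks. First, you invoke "strict quasi-concavity is an open condition" to get it on a whole closed ball; this is fine because negative definiteness of the Hessian restricted to $\nabla g_i^\perp$ is indeed an open condition in $a$ when $\nabla g_i(a)\ne 0$, which you have just established. Second, when you pass to $K=\{g_i\ge 0,\ \rho-\|x-\bar u\|^2\ge 0\}$ and claim transversality of the two boundary pieces for small $\rho$, you should note that this uses $\nabla g_i(\bar u)\ne 0$ again (so the sphere of radius $\rho$ is not tangent to $\{g_i=0\}$ at the intersection for generic small $\rho$); a one-line justification would tighten the sketch.
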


For polynomial functions $g$, strict quasi-concavity can be described as
follows. For $a \in \R^n$ write $g(x+a)$ in homogeneous terms as
\[
    g(x+a) \ = \ g_0(x) \ + \ g_1(x) \ + \ g_2(x) \ + \ \cdots  \ + \ g_k(x).
\]
Then $g$ is strictly quasi-concave at $a$ if for every $v \in \R^n \setminus
\{0\}$ with $g_1(v) = 0$, we have $g_2(v) < 0$. 
The following Lemma is key for showing that smooth hyperbolicity cones are
spectrahedral shadows.  Part (ii) is  already proven by elementary means in
\cite[Prop.~9]{ren}, we give a short alternative proof which is not elementary
however. For (i) we provide an elementary proof. Recall that a convex set $S$
is called pointed, if $S$ does not contain a line.

\begin{lemma}\label{sqc} 
    Let $p\in\R[x]$ be  real-zero with respect to $e$.
    \begin{itemize}
        \item[(i)] If $\sS_e(p)$ is pointed, then $p$
            is strictly quasi-concave at each interior point $a \in \sS_e(p)$.
        \item[(ii)] If $a\in\partial\sS_e(p)$ is a smooth point of $p$,
            and $p$ does not vanish on a full line through $a$, then $p$ is
            strictly quasi-concave at $a$.  
\end{itemize}
\end{lemma}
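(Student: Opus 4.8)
I would prove the two parts independently — (i) by manipulating roots of univariate restrictions, (ii) by reducing to the plane and applying the Helton--Vinnikov theorem. Throughout I normalize $p$ to be positive on $\interior\sS_e(p)$: replacing $p$ by $-p$ leaves $\sS_e(p)$ unchanged, and positivity on the interior is the sign convention under which the assertion holds (strict quasi-concavity is not preserved by $p\mapsto -p$). I also use that $\interior\sS_e(p)$ is the connected component of $\{p\neq0\}$ containing $e$, so $p$ has no zero there.

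For (i), fix $a\in\interior\sS_e(p)$ and $v\neq0$, and write $p(x+a)=g_0+g_1(x)+g_2(x)+\cdots$. Since $p$ is real-zero with respect to $a$, the polynomial $q(t):=p(a+tv)=g_0+g_1(v)t+g_2(v)t^2+\cdots$ has only real roots, all nonzero because $q(0)=p(a)>0$; factoring $q(t)=p(a)\prod_i(1+r_it)$ gives $g_1(v)=p(a)\sum_i r_i$ and $g_2(v)=p(a)\sum_{i<j}r_ir_j$. If $g_1(v)=0$, then $\sum_i r_i=0$ and hence $g_2(v)=-\tfrac12 p(a)\sum_i r_i^2\le 0$, with equality only if all $r_i=0$, i.e.\ $q\equiv p(a)$ and $p$ is constant and nonzero along $a+\R v$. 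That line would then be a connected subset of $\{p\neq0\}$ through the interior point $a$, hence contained in $\sS_e(p)$, contradicting pointedness; so $g_2(v)<0$, which is exactly strict quasi-concavity of $p$ at $a$.

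For (ii), fix a smooth point $a\in\partial\sS_e(p)$ and $v\neq0$ with $g_1(v)=0$ (here $g_0=p(a)=0$, boundary points of $\sS_e(p)$ lying on $\{p=0\}$). My plan is to reduce to two variables. Pick $f\in\interior\sS_e(p)$ with $\nabla p(a)\cdot(f-a)\neq 0$, possible because the nonempty open set $\interior\sS_e(p)$ is not contained in the hyperplane $a+\nabla p(a)^{\perp}$, and set $P:=a+\lin\{v,f-a\}$, which is genuinely two-dimensional since $g_1(v)=0\neq\nabla p(a)\cdot(f-a)$. Parametrizing $P$ by $(s,t)\mapsto a+sv+t(f-a)$ (so $(0,0)\leftrightarrow a$, $(0,1)\leftrightarrow f$), let $\tilde p$ be the restriction of $p$ to $P$. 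Then $\tilde p$ is real-zero with respect to $(0,1)$; its rigidly convex set corresponds to $P\cap\sS_e(p)$, so $(0,0)$ lies on its boundary; $(0,0)$ is a smooth point of $\tilde p$, since the linear part of $\tilde p$ there is $t\,\nabla p(a)\cdot(f-a)\neq0$; $\tilde p$ vanishes on no line through $(0,0)$, because such a line would lift to a line through $a$ on which $p$ vanishes; and, crucially, $g_2(v)$ — the value at $v$ of the degree-two part of $p$ at $a$ — is exactly the coefficient of $s^2$ in $\tilde p(s,0)=p(a+sv)$. Thus it is enough to show this coefficient is negative for each such $v$, which then gives strict quasi-concavity of $p$ at $a$.

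Now apply the planar Helton--Vinnikov theorem~\cite{hevi} to write $\tilde p=\det(A_0+sA_1+tA_2)$ with $A_i$ real symmetric and $A_0+A_2\succ0$. Since $(0,0)$ lies in the rigidly convex set, which here is $\{(s,t):A_0+sA_1+tA_2\succeq0\}$, we have $A_0\succeq0$; and since $(0,0)$ is a smooth point, $A_0$ has corank exactly one — the multiplicity of a point for a definite determinantal representation equals the corank of the matrix there, which follows from Lemma~\ref{mult} applied to $\tilde p$ together with a short eigenvalue-perturbation argument using $A_0+A_2\succ0$. Choosing coordinates so that $A_0=\diag(0,D)$ with $D\succ0$, the condition $g_1(v)=0$ forces the top-left entry of $A_1$ to vanish (the coefficient of $s$ in the linear part of $\tilde p$ at $(0,0)$ equals $\det D$ times that entry), so $A_1=\left(\begin{smallmatrix}0&\alpha^{t}\\ \alpha&C\end{smallmatrix}\right)$; and the no-line hypothesis forces $\alpha\neq0$, since $\alpha=0$ would give $\det(A_0+sA_1)\equiv0$, i.e.\ $\tilde p$ vanishing on the line $\{t=0\}$. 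A Schur-complement computation now gives, for $s$ near $0$, $\tilde p(s,0)=\det(A_0+sA_1)=-s^2\det(D+sC)\,\alpha^{t}(D+sC)^{-1}\alpha$, so the coefficient of $s^2$ equals $-(\det D)\,\alpha^{t}D^{-1}\alpha<0$ because $D\succ0$ and $\alpha\neq0$; hence $g_2(v)<0$, proving (ii). The step I expect to require the most care is this reduction to the plane: checking that $\tilde p$ inherits every hypothesis (real-zeroness, the base point being interior to the rigidly convex set, the boundary point remaining smooth, and the absence of a line of zeros) and matching $g_2(v)$ with the stated coefficient of $\tilde p$ rests on standard but technical facts about restrictions of rigidly convex sets to affine subspaces and on the equality of multiplicity and corank for definite determinantal representations — facts that may each deserve a small lemma of their own; once they are available, the closing matrix computation is routine.
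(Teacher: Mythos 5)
Your proof is correct and follows essentially the same route as the paper: part (i) by factoring $p(a+tv)$ into linear factors and comparing the first two elementary symmetric functions of the roots, and part (ii) by restricting to a $2$-plane, invoking Helton--Vinnikov for a definite determinantal representation, and exploiting the corank-one condition at the smooth boundary point. The only differences are cosmetic: in (ii) the paper restricts to the plane through $e$, $a$, and $a+v$ (which automatically contains the interior point $e$, and is two-dimensional whenever $g_1(v)=0$ because smoothness of $a$ forces $\nabla p(a)\cdot a\neq 0$), and it finishes by reading off $p_1(v)$ and $p_2(v)$ from a Leibniz expansion of $\det(I+A+tV)$ rather than by a Schur complement; your preliminary remark on the sign of $p$ is a valid point that the paper leaves implicit in its normalization $p(a)=1$.
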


\begin{proof}
    For (i) let us  assume  without loss of generality, that $p(a) = 1$. Then
    \[
        q(t) \ := \ p(a + t v) \ = \ 1 \ + \ p_1(v) t \ + \ p_2(v)   t^2 \
        + \ \cdots \ + \ p_k(v) t^k \ = \ 
        \prod_{i=1}^k (1 + \lambda_i t)
    \]
    has only real non-zero roots. Note that $k\geq 1$ follows from the
    assumption that $\sS_e(p)$ does not contain a line. In particular,
    $p_1(v)  =  \lambda_1 + \lambda_2 + \cdots + \lambda_k$ and
    \[
        p_2(v) \ = \ \sum_{i < j} \lambda_i\lambda_j \ = \ 
        p_1(v)^2 \ - \ \sum_i \lambda_i^2.
    \]
    Hence $p_2(v) < 0$ whenever $p_1(v) = 0$ and $v \not= 0$.

    (ii) Assume $e=0$ and write 
    $$
        q(s,t) \ = \ p(sa+tv) \ = \ \det(I \ + \ sA \ + \ tV)
    $$ 
    with $A$ and $V$ being symmetric matrices of size  $d=\deg(q)$, using the
    Helton-Vinnikov theorem~\cite{hevi}.  From the fact that $a\in\partial
    \sS_e(p)$ is a smooth point of $p$ we deduce that $I+A$ is positive
    semidefinite of rank $d-1.$ Without loss of generality we may assume
    \mbox{$I+A=\diag(1,\ldots,1,0)$} and have $p(a+tv)=\det(I+A+tV)$. Let us
    write $V=(\ell_{ij})_{i,j}$. The Leibnitz formula now yields
    $$
        p_1(v) \ = \ \ell_{dd} \quad \text{and } \quad p_2(v) \ = \
        \sum_{i = 1}^{d-1}\ell_{ii}\ell_{dd}-\ell_{id}^2.
    $$
    So $p_1(v)=0$ and $p_2(v)\geq 0$ implies $\ell_{id}=0$ for all $i$. But
    then $p(a+t v) \equiv 0$ and $p$ vanishes on a line through $a$.    
\end{proof}

The following two examples show that the assumptions in Lemma~\ref{sqc} are
indeed necessary.
\begin{example}
    For the first part, consider the polynomial
    \[
    p(x,y,z) \ = \ \det \begin{pmatrix} z & x \\ x & 1 \end{pmatrix} \ = \ z - x^2 \
        \in \ \R[x,y,z]
    \]
    which is real-zero with respect to $e=(0,0,1).$ Its rigidly convex set is
    known as the \emph{Taco}. The strict quasi-concavity is not fulfilled at
    $e$.

    For the second part, consider the Cayley cubic
    \[
        p(x,y,z) \ = \ \det 
        \begin{pmatrix}
            1 & x & y \\
            x & 1 & z \\
            y & z & 1 
        \end{pmatrix}
        \ = \ 1+2xyz-x^2-y^2-z^2\in\R[x,y,z]
    \]
    which is an irreducible polynomial real-zero with respect to $e=(0,0,0)$.
    The corresponding rigidly convex set is known as the \emph{Samosa}. The
    boundary of the Samosa contains exactly $4$ singular points (of
    multiplicity $2$) and $\tbinom{4}{2} = 6$ line segments connecting pairs
    of singular points; see~\cite{sanyal} for an explanation of these numbers.
    In particular, every point in the interior of a line segment is smooth and
    $p$ is not strictly quasi-concave at these points.
\end{example}

\section{The Main Result} \label{mainsec} 

The first version of our main result is stated for real-zero polynomials.
\begin{theorem}\label{mainrz}
Let $p\in\R[x]$ be real-zero with respect to $e$, and assume $\sS_e(p)$ is compact. Further assume that for each $a\in\partial \sS_e(p)$ and each irreducible factor $p_i$ of $p$ with $p_i(a)=0$, $a$ is a smooth point of $p_i$ and $p_i$ does not vanish on a whole line through $a$. Then $\sS_e(p)$ is a spectrahedral shadow.
\end{theorem}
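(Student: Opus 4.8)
The strategy is a localization argument: I will cover the compact boundary $\partial\sS_e(p)$ by finitely many open balls on each of which $\sS_e(p)$ is the solution set of finitely many polynomial inequalities $p_i\ge 0$ coming from the irreducible factors $p_i$ of $p$, with each relevant $p_i$ \emph{strictly quasi-concave} on that ball, and then invoke the local criterion of Helton and Nie~\cite{heni,heni2}: a compact convex body with nonempty interior each of whose boundary points admits such a local strictly-quasi-concave description is a spectrahedral shadow.

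First, reductions and basic facts. Set $\sS:=\sS_e(p)$; it is pointed because it is compact, and it has nonempty interior because $p(e)\neq 0$ forces a whole neighbourhood of $e$ into the component $C$ of $\{p\neq 0\}$ through $e$. Write $p=c\,p_1^{m_1}\cdots p_r^{m_r}$ with the $p_i$ the distinct irreducible factors; after flipping signs we may assume $c>0$ and $p_i(e)>0$ for all $i$. From $p(e)\neq 0$ we get $p_i(e)\neq 0$, and for every $a$ the polynomial $p_{a,e}(t)=c\prod_i(p_i)_{a,e}(t)^{m_i}\in\R[t]$ is nonzero (it equals $p(e)$ at $t=0$) and has only real roots; since each $(p_i)_{a,e}(t)$ is a genuine divisor of it in $\R[t]$, it too has only real roots, so every $p_i$ is real-zero with respect to $e$. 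Writing $C_i$ for the (open, convex) component of $\{p_i\neq 0\}$ through $e$, one has $C=\bigcap_i C_i$, hence $\sS=\bigcap_i\sS_e(p_i)$ and $\interior\sS=\bigcap_i C_i$; moreover $\interior\sS_e(p_i)=C_i$ and $p_i>0$ on $C_i\supseteq\interior\sS$.

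Next, the local descriptions. Fix $a\in\partial\sS$ and let $I(a)=\{\,i:p_i(a)=0\,\}$, which is nonempty. For $j\notin I(a)$ we have $p_j(a)\neq 0$ and $a\in\sS_e(p_j)=\overline{C_j}$, hence $a\in C_j$. For $i\in I(a)$ the point $a$ lies in $\partial\sS_e(p_i)$, is a smooth point of $p_i$, and $p_i$ does not vanish on a line through $a$, so Lemma~\ref{sqc}(ii) shows $p_i$ is strictly quasi-concave at $a$. Since strict quasi-concavity is an open condition (a routine compactness argument) and $\nabla p_i(a)\neq 0$, I may choose an open ball $U_a\ni a$ so small that $U_a\subseteq C_j$ for all $j\notin I(a)$ and, for every $i\in I(a)$, $p_i$ is strictly quasi-concave throughout $U_a$ while $\{p_i=0\}\cap U_a$ is a connected smooth hypersurface with $\{p_i>0\}\cap U_a$ connected. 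On such a $U_a$ one checks that $\sS_e(p_i)\cap U_a=\{\,x\in U_a:p_i(x)\ge 0\,\}$ for $i\in I(a)$: the nonempty set $C_i\cap U_a=\interior\sS_e(p_i)\cap U_a$ lies in the connected set $\{p_i>0\}\cap U_a$, which therefore lies in $C_i\subseteq\sS_e(p_i)$; together with $\sS_e(p_i)\subseteq\{p_i\ge 0\}$ and $\{p_i=0\}\cap U_a\subseteq\overline{\{p_i>0\}\cap U_a}\subseteq\overline{C_i}$, this gives equality. Intersecting over $i$ and using $\sS=\bigcap_i\sS_e(p_i)$ yields
\[
    \sS\cap U_a \ = \ \{\,x\in U_a:\ p_i(x)\ge 0\ \text{ for all }i\in I(a)\,\},
\]
with every $p_i$, $i\in I(a)$, strictly quasi-concave on $U_a$. (At an interior point of $\sS$ one may instead take a small ball inside $\interior\sS$ and use $p$ itself, which is strictly quasi-concave there by Lemma~\ref{sqc}(i) since $\sS$ is pointed, in case descriptions at interior points are also needed.)

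Finally, $\partial\sS$ is compact, so finitely many of the balls $U_a$ cover it, and the pairs $(U_a,\{p_i\}_{i\in I(a)})$ supply exactly the input of the Helton--Nie local criterion~\cite{heni2}; hence $\sS=\sS_e(p)$ is a spectrahedral shadow. The main obstacle is the local equality $\sS_e(p_i)\cap U_a=\{x\in U_a:p_i(x)\ge 0\}$: this is the only place the hypotheses enter — smoothness of each vanishing factor and its non-vanishing on a line through the boundary point, via Lemma~\ref{sqc}(ii) — and it rests on a careful comparison, near a smooth point of $p_i$, between the zero hypersurface of $p_i$ and the boundary of the convex region $\sS_e(p_i)$; from there one is leaning on the nontrivial local-to-global patching machinery of Helton and Nie.
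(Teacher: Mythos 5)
Your proof is correct and follows essentially the same route as the paper: decompose $p$ into irreducible factors, identify the active factors $I(a)$ at each boundary point, invoke Lemma~\ref{sqc}(ii) for strict quasi-concavity, obtain a local description of $\sS_e(p)$ near $a$ by the $p_i$, $i\in I(a)$, and pass to the global statement by compactness and the Helton--Nie machinery of~\cite{heni,heni2}. The only cosmetic differences are that the paper explicitly forms the closed ball caps $N_\epsilon(a)=B_\epsilon(a)\cap\sS_e(p)$ (adding the strictly quasi-concave constraint $\epsilon^2-\|b-a\|^2\ge 0$), certifies each $N_\epsilon(a)$ as a spectrahedral shadow via Theorem~\ref{henith}, and then takes the convex hull of a finite subcover using Theorem~2.2 of~\cite{heni2}, while you invoke the corresponding local Helton--Nie criterion directly and propagate strict quasi-concavity from $a$ to a neighborhood by openness rather than by applying Lemma~\ref{sqc}(i) at interior points (which in the paper requires the observation that the non-vanishing-on-a-line hypothesis forces each $\sS_e(p_i)$ to be pointed).
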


\begin{remark}\label{rem}
Note that the conditions in Theorem \ref{mainrz} are fulfilled if each point $a\in\partial\sS_e(p)$ is a smooth point of $p$. Then only one irreducible factor of $p$ can vanish on points of $\partial \sS_e(p)$, and compactness together with smoothness implies that this factor will not vanish on a whole line through any boundary point.
\end{remark}

\begin{proof}[Proof of Theorem \ref{mainrz}]
   Let $p=p_1\cdots p_m$ be the decomposition of $p$ into irreducible
   factors. Each $p_i$ is real-zero with respect to $e$, and
   $$
       \sS_e(p)=\sS_e(p_1) \ \cap \ \sS_e(p_2) \ \cap \   \cdots \ \cap \
       \sS_e(p_m).
   $$
   Fix some point $a\in\partial\sS_e(p)$ and let $I_a=\{ i\;:\; p_i(a)=0\}$.
   Since $a$ is a smooth point for every $p_i$ for $i \in I_a$, locally at
   $a$ the set $\sS_e(p)$ is defined by the conditions $p_i \ge 0$ for $i
   \in I_a$. That is, for $\epsilon >0$ samll enough
   \[
       N_\epsilon(a) \ := \ B_\epsilon(a) \ \cap \ \sS_e(p) \ = \
       \{b\in\R^n\;:\; \|
       b-a\|^2\leq\epsilon^2, p_i(b)\geq 0
       \text{ for } i\in I_a\}.
   \]
   We have $N_\epsilon(a) \subseteq \sS_e(p_i)$ for all $i \in I_a$ and, for
   $\epsilon > 0$ small, every point in $N_\epsilon(a) \cap
   \partial\sS_e(p_i)$ is a smooth point of $p_i$ for $i \in I_a$.  Moreover,
   for $\epsilon > 0$ sufficiently small, $N_\epsilon(a)$ misses all points of
   $\partial\sS_e(p_i)$ through which $p_i$ vanishes along a line. This also
   implies that $\sS_e(p_i)$ is pointed and we can apply Lemma~\ref{sqc} to
   infer that $p_i$ is strictly quasi-concave at every point of
   $N_\epsilon(a)$. The function $\epsilon^2 - \|b-a\|^2$ is clearly strictly
   quasi-concave on $N_\epsilon(a)$ and, by Theorem~\ref{henith},
   $N_\epsilon(a)$ is a spectrahedral shadow.

   The sets $N_\epsilon(a)$ cover the boundary of $\sS_e(p)$ and, by
   compactness, there is a finite subcover of $\partial\sS_e(p)$ by
   spectrahedral shadows. Using Theorem 2.2 in~\cite{heni2}, taking the
   convex hull of the finite cover shows that $\sS_e(p)$ is a
   spectrahedral shadow.
\end{proof}

The \emph{lineality space} $\lin(S)$ of a convex set $S \subset \R^n$ is the
largest linear subspace $L$ such that $a + L \subseteq S$ for some
(equivalently all) $a \in S$.  Assuming that $0 \in S$, it is
well-known (cf.~\cite[Thm.~2.5.8]{web}) that
\begin{equation}\label{linspc}
    S \ = \ (S \cap L^\perp) + L \ \subseteq \ L^\perp + L \ = \
    \R^n
\end{equation}
and since spectrahedral shadows are closed under taking Minkowski sums, it is
sufficient to prove our claims for pointed convex sets.

We can now easily translate the last result to hyperbolic polynomials, and thus prove Theorem \ref{mainhyp}. We only stated the smooth version as described in Remark \ref{rem}, and leave the more general version as in Theorem \ref{mainrz} to the reader.

\begin{proof}[Proof of Theorem~\ref{mainhyp}]
    As just explained, we can assume that $\Lambda_e(h)$ is pointed. Then
    there is a hyperplane $H$ such that $S = \Lambda_e(h) \cap H$ is compact
    and $\Lambda_e(h) = \cone(S)$. Since $H$ meets the interior of
    $\Lambda_e(h)$, we can assume that $e \in H$ and thus $S$ is a compact
    rigidly convex set defined by a real-zero polynomial meeting the
    requirements of Theorem~\ref{mainrz}. Finally, taking the conical hull
    retains the property of being a spectrahedral shadow; see for example,
    Proposition~2.1 in~\cite{nesi}.
\end{proof}

As mentioned in the introduction, any hyperbolic polynomial can be
approximated arbitrarily close by a strict one, see Nuij \cite{nuij}. For this
let $\partial_e h$ be the directional derivative of $h$ with respect to $e$.
By Rolle's Theorem, this is again a hyperbolic polynomial. For any linear form $\ell$ with $\ell(e)\neq 0$ and
$\epsilon > 0$ sufficiently small
\[
    \tilde{h}(x) \ := \ h(x)\ + \ \epsilon \ell(x)\partial_eh(x)
\]
is again hyperbolic, and the root multiplicity at each $a$ with $h(a)=0$ and
$\ell(a) \not=0$ is reduced by one.  In particular, if $\ell$ does not vanish
on $\Lambda_e(h)$, this reduces the multiplicity on the boundary of the
hyperbolicity cone. Iterating this process gives rise to a hyperbolic
polynomial that meets our smoothness assumption, and that is arbitrarily close
to $h$. In view of Theorem \ref{mainhyp}, each hyperbolicity cone can be
approximated arbitrarily close by a spectrahedral shadow.  We currently do not
know if our arguments can be extended to all hyperbolicity cones and we close
with the following conjecture.

\textbf{Projected Lax Conjecture. } Every hyperbolicity cone is a spectrahedral shadow.

\begin{bibdiv} 
\begin{biblist}

\bib{BGLS}{article}{
   author={Bauschke, H.},
   author={G{\"u}ler, O.},
   author={Lewis, A.},
   author={Sendov, H.},
   title={Hyperbolic polynomials and convex analysis},
   journal={Canad. J. Math.},
   volume={53},
   date={2001},
   number={3},
   pages={470--488},
   issn={0008-414X},
}



\bib{br1}{article}{
    AUTHOR = {Br{\"a}nd{\'e}n, P.},
     TITLE = {Obstructions to determinantal representability},
   JOURNAL = {Adv. Math.},
  FJOURNAL = {Advances in Mathematics},
    VOLUME = {226},
      YEAR = {2011},
    NUMBER = {2},
     PAGES = {1202--1212},
}

\bib{br2}{article}{
AUTHOR={Br{\"a}nd{\'e}n, P.},
TITLE={Hyperbolicity cones of elementary symmetric polynomials are hyperbolic},
JOURNAL ={Preprint},
YEAR={2012},
}

%

%
%
%
%
\bib{cosw}{article}{
    AUTHOR = {Choe, Y.B.}
    AUTHOR={Oxley, J. G.}
    AUTHOR={Sokal, A. D.}
    AUTHOR={Wagner, D. G.},
     TITLE = {Homogeneous multivariate polynomials with the half-plane
              property},
      NOTE = {Special issue on the Tutte polynomial},
   JOURNAL = {Adv. in Appl. Math.},
  FJOURNAL = {Advances in Applied Mathematics},
    VOLUME = {32},
      YEAR = {2004},
    NUMBER = {1-2},
     PAGES = {88--187},
  }

\bib{gar}{article}{
    AUTHOR = {G{\aa}rding, L.},
     TITLE = {An inequality for hyperbolic polynomials},
   JOURNAL = {J. Math. Mech.},
    VOLUME = {8},
      YEAR = {1959},
     PAGES = {957--965},
 }

\bib{gue}{article}{
    AUTHOR = {G{\"u}ler, O.},
     TITLE = {Hyperbolic polynomials and interior point methods for convex
              programming},
   JOURNAL = {Math. Oper. Res.},
  FJOURNAL = {Mathematics of Operations Research},
    VOLUME = {22},
      YEAR = {1997},
    NUMBER = {2},
     PAGES = {350--377},
}

\bib{gur}{article}{
   author={Gurvits, L.}
   title={Van der Waerden/Schrijver-Valiant like conjectures and stable (aka
   hyperbolic) homogeneous polynomials: one theorem for all},
   journal={Electron. J. Combin.},
   volume={15},
   date={2008},
   number={1},
   pages={Research Paper 66, 26},
   issn={1077-8926},
}


\bib{heni2}{article}{
    AUTHOR = {J.W. Helton and J. Nie},
     TITLE = {Sufficient and necessary conditions for semidefinite
              representability of convex hulls and sets},
   JOURNAL = {SIAM J. Optim.},
  FJOURNAL = {SIAM Journal on Optimization},
    VOLUME = {20},
      YEAR = {2009},
    NUMBER = {2},
     PAGES = {759--791},
 }

\bib{heni}{article}{
    AUTHOR = {J.W. Helton and J. Nie},
     TITLE = {Semidefinite representation of convex sets},
   JOURNAL = {Math. Program.},
  FJOURNAL = {Mathematical Programming. A Publication of the Mathematical
              Programming Society},
    VOLUME = {122},
      YEAR = {2010},
    NUMBER = {1, Ser. A},
     PAGES = {21--64},
      ISSN = {0025-5610},
}

\bib{hevi}{article}{
    AUTHOR = {J.W. Helton and V. Vinnikov},
     TITLE = {Linear matrix inequality representation of sets},
   JOURNAL = {Comm. Pure Appl. Math.},
  FJOURNAL = {Communications on Pure and Applied Mathematics},
    VOLUME = {60},
      YEAR = {2007},
    NUMBER = {5},
     PAGES = {654--674},
}

\bib{lax}{article}{
    AUTHOR = {Lax, P. D.},
     TITLE = {Differential equations, difference equations and matrix
              theory},
   JOURNAL = {Comm. Pure Appl. Math.},
  FJOURNAL = {Communications on Pure and Applied Mathematics},
    VOLUME = {11},
      YEAR = {1958},
     PAGES = {175--194},
 }

\bib{lepara}{article}{
    AUTHOR = {Lewis, A. S.}
    AUTHOR={Parrilo, P. A.}
    AUTHOR={Ramana, M. V.},
     TITLE = {The {L}ax conjecture is true},
   JOURNAL = {Proc. Amer. Math. Soc.},
  FJOURNAL = {Proceedings of the American Mathematical Society},
    VOLUME = {133},
      YEAR = {2005},
    NUMBER = {9},
     PAGES = {2495--2499 (electronic)},
 }


%
%

\bib{nesi}{article}{
AUTHOR={T. Netzer and R. Sinn},
TITLE ={A note on the convex hull of finitely many projections of spectrahedra},
JOURNAL ={\href{http://arxiv.org/abs/0908.3386}{arXiv:0908.3386}},
}

\bib{nuij}{article}{
    AUTHOR = {W. Nuij},
     TITLE = {A note on hyperbolic polynomials},
   JOURNAL = {Math. Scand.},
  FJOURNAL = {Mathematica Scandinavica},
    VOLUME = {23},
      YEAR = {1968},
     PAGES = {69--72 (1969)},
   }

%
%

%


%

\bib{ren}{article}{
    AUTHOR = {J. Renegar},
     TITLE = {Hyperbolic programs, and their derivative relaxations},
   JOURNAL = {Found. Comput. Math.},
  FJOURNAL = {Foundations of Computational Mathematics. The Journal of the
              Society for the Foundations of Computational Mathematics},
    VOLUME = {6},
      YEAR = {2006},
    NUMBER = {1},
     PAGES = {59--79},
      ISSN = {1615-3375},
}

%


\bib{sanyal}{article}{
TITLE={On the derivative cones of polyhedral cones},
AUTHOR={Sanyal, R.},
JOURNAL={to appear in Adv. Geometry},
}
\bib{vin}{article}{
AUTHOR={Vinnikov, V.},
TITLE ={LMI representations of convex semialgebraic sets and determinantal representations of algebraic hypersurfaces: past, present, and future},
JOURNAL={\href{http://arxiv.org/abs/1205.2286}{arXiv:1205.2286}},
}

\bib{web}{book}{
    AUTHOR = {R. Webster},
     TITLE = {Convexity},
    SERIES = {Oxford Science Publications},
 PUBLISHER = {The Clarendon Press Oxford University Press},
   ADDRESS = {New York},
      YEAR = {1994},
     PAGES = {xviii+444},
}



\end{biblist}
\end{bibdiv}
 
\end{document}